\newtheorem*{maintheorem}{Main Theorem}
\newtheorem{theoremAlph}{Theorem}
\newtheorem{theorem}{Theorem}[section]
\newtheorem{lemma}[theorem]{Lemma}	
\newtheorem{proposition}[theorem]{Proposition}
\theoremstyle{definition}
\newtheorem{definition}[theorem]{Definition} 
\newtheorem{remark}[theorem]{Remark}
\theoremstyle{definition} 
\newtheorem*{ack}{Acknowledgements}
\numberwithin{equation}{section}
\newcommand{\C}{\mathbb{C}} 
\newcommand{\R}{\mathbb{R}} 
\newcommand{\Q}{\mathbb{Q}} 
\newcommand{\Z}{\mathbb{Z}} 
\newcommand{\Rpsc}{\mathcal{R}^+}
\newcommand{\Mpsc}{\mathcal{M}^+}
\newcommand{\SO}{\textup{SO}}
\newcommand{\Spin}{\textup{Spin}}
\newcommand{\Pin}{\textup{Pin}}
\DeclareRobustCommand*\uell{\mathpalette\@uell\relax}
\newcommand*\@uell[2]{
	\setbox0=\hbox{$#1\ell$}
	\setbox1=\hbox{\rotatebox{10}{$#1\ell$}}
	\dimen0=\wd0 \advance\dimen0 by -\wd1 \divide\dimen0 by 2
	\mathord{\lower 0.1ex \hbox{\kern\dimen0\unhbox1\kern\dimen0}}
}
\begin{document}
	\title[\rmfamily Moduli Spaces of PSC-Metrics on Top. Spherical Space Forms]{\rmfamily Moduli Spaces of Metrics of Positive \\Scalar Curvature on Topological\\ Spherical Space Forms}
	\date{\today}
	\subjclass[2010]{53C20}
	\keywords{moduli space, topological spherical space form, positive scalar curvature}
	\author{Philipp Reiser}
	\address{Institut f\"ur Mathematik\\Karlsruher Institut f\"ur Technologie (KIT)\\Germany.}
	\email{\href{mailto:philipp.reiser@kit.edu}{philipp.reiser@kit.edu}}
	
	\begin{abstract}
		Let $M$ be a topological spherical space form, i.e.\ a smooth manifold whose universal cover is a homotopy sphere. We determine the number of path components of the space and moduli space of Riemannian metrics with positive scalar curvature on $M$ if the dimension of $M$ is at least 5 and $M$ is not simply-connected.
	\end{abstract}

	\maketitle

	\section{Introduction and Main Result}

	Let $M$ be a closed smooth manifold. We denote by $\Rpsc(M)$ the space of Riemannian metrics with positive scalar curvature on $M$ and by $\Mpsc(M)$ the corresponding \emph{moduli space}, i.e.\ the quotient of $\Rpsc(M)$ by the pull-back action of the diffeomorphism group. We equip $\Rpsc(M)$ with the smooth topology and the moduli space with the quotient topology. These spaces, as well as spaces and moduli spaces of Riemannian metrics satisfying different curvature conditions have been studied in various ways, see e.g.\ \cite{tw15}, \cite{tu16} for an outline.
	
	This article addresses the problem of determining the number of path components of both $\Rpsc(M)$ and $\Mpsc(M)$ if $M$ is a non-simply-connected \emph{topological spherical space form}, i.e.\ a non-trivial quotient of a homotopy sphere by a free action of a finite group. In case $M$ is a \emph{linear spherical space form}, i.e.\ the action is given by an isometric action on the round sphere, then this problem has been solved if the dimension of $M$ is at\linebreak least $5$:
	
	\begin{theoremAlph}[{\cite[Theorem 0.1]{bg96}}]
		\label{thm_linSpF}
		Let $M$ be a linear spherical space form of dimension at least $5$ which is not simply-connected. Then there exists an infinite family of metrics $g_i\in\Rpsc(M)$ such that $g_i$ and $g_j$ are not concordant and lie in different path components of $\Mpsc(M)$ if $i\neq j$.
	\end{theoremAlph}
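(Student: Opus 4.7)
The plan is to distinguish PSC concordance classes on $M$ via $\rho$-invariants of the Dirac operator twisted by flat unitary bundles, and then to realise infinitely many values of this invariant by modifying the round metric through a surgery construction. Since $M = S^n/\Gamma$ with $\Gamma$ finite acting freely and isometrically on the round sphere, one has a Dirac-type operator $D$ on $M$ (using the spin structure when $M$ is spin, and a suitable twisted variant otherwise). For each unitary representation $\alpha\colon\Gamma\to U(N)$, let $V_\alpha\to M$ be the associated flat Hermitian bundle, $D_\alpha$ the operator $D$ twisted by $V_\alpha$, and set
\[
  \tilde\eta_\alpha(M,g) \;:=\; \eta(D_\alpha,g) - N\,\eta(D,g).
\]

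For $g\in\Rpsc(M)$ the Lichnerowicz formula shows that both $D$ and $D_\alpha$ are invertible, since $V_\alpha$ is flat and contributes no extra curvature term, so $\tilde\eta_\alpha$ varies smoothly along paths of PSC metrics. Applying the Atiyah--Patodi--Singer index theorem on a PSC concordance between $(M,g_0)$ and $(M,g_1)$, and then forming the combination $\eta(D_\alpha)-N\,\eta(D)$, causes the interior $\hat A\cdot\mathrm{ch}$-type integrals to cancel, whence $\tilde\eta_\alpha(g_1)-\tilde\eta_\alpha(g_0)\in\Z$. Thus $\tilde\eta_\alpha\bmod\Z$ is an invariant of PSC concordance classes, and \emph{a fortiori} of path components of $\Rpsc(M)$.

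I would then construct the family $\{g_i\}$ by modifying $g_0$ via interior connected sums with spin PSC manifolds $(N_i,h_i)$, attached along small balls inside a fundamental domain for the $\Gamma$-action so that $\pi_1(M)$ and $\alpha$ are preserved. A surgery-theoretic APS computation on the bordism realising the connected sum identifies the jump $\tilde\eta_\alpha(g_i)-\tilde\eta_\alpha(g_0)$ with a characteristic number of $N_i$ paired against the virtual character $\chi_\alpha-N$, and by taking the $N_i$ to be suitable generators of $\Omega_n^{\mathrm{spin}}$ (or Bott-type manifolds in the appropriate dimension) one can arrange infinitely many distinct residues modulo $\Z$. Finally, since $\Gamma$ is finite its irreducible representations form a finite set on which $\pi_0(\Diff(M))$ acts through a finite quotient, so the vector-valued invariant $(\tilde\eta_\alpha\bmod\Z)_\alpha$ takes only finitely many values on each $\Diff(M)$-orbit, and an infinite family of concordance classes with distinct invariant vectors descends to infinitely many path components of $\Mpsc(M)$.

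The main obstacle is the surgery/APS computation that links the chosen $N_i$ to \emph{unbounded} residues of $\tilde\eta_\alpha$ modulo $\Z$: one needs an explicit index-theoretic formula on the bordism, together with a nonvanishing verification ensuring that these residues do not collapse into finitely many classes. The nonsimply-connected hypothesis is essential here — for $\Gamma$ trivial all $\rho$-invariants vanish identically and the strategy collapses — and it is precisely the representation theory of finite subgroups of $\textup{O}(n+1)$ that drives the infinitude of distinguishing invariants.
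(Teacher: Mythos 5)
Your overall strategy --- distinguishing PSC concordance classes by the $\rho$-invariants $\tilde\eta_\alpha=\eta(D_\alpha)-N\,\eta(D)$, and then observing that $\Diff(M)$ can only permute the finitely many irreducible representations of the finite group $\pi_1(M)$, so that infinitely many distinct invariant vectors descend to infinitely many path components of $\Mpsc(M)$ --- is exactly the Botvinnik--Gilkey strategy on which the quoted theorem rests, and your concordance-invariance argument via Atiyah--Patodi--Singer on the cylinder is correct. The step where you actually produce infinitely many values, however, is broken. You propose to modify the metric by interior connected sums with \emph{simply-connected} spin PSC manifolds $(N_i,h_i)$ attached inside a fundamental domain, and to identify the jump of $\tilde\eta_\alpha$ with a characteristic number of $N_i$ paired against $\chi_\alpha-N$. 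But over a simply-connected piece the flat bundle $V_\alpha$ is trivial of rank $N$, so the local index density of $D_\alpha$ there is exactly $N$ times that of $D$; the APS computation on the bordism realising the connected sum therefore gives $\tilde\eta_\alpha(g_i)-\tilde\eta_\alpha(g_0)\in\Z$, i.e.\ the invariant you are using is, by construction, blind to precisely this kind of modification --- this is the same cancellation you invoked to prove concordance invariance in the first place. To obtain unbounded residues one must feed in bordism classes carrying nontrivial fundamental-group data; in \cite{bg96} this is done by computing $\eta$ explicitly on lens spaces and quaternionic space forms and transporting these classes by Gromov--Lawson surgery over the classifying space of $\pi_1(M)$, and it is exactly here that the group-theoretic hypotheses of Theorems \ref{thm_bg2} and \ref{thm_bg3} (an element $g\in\mathcal{G}$ not conjugate to $-g$ or to $-g^{-1}$, resp.\ $g^{-1}$; the $\Pin^{\pm}$ conditions in even dimensions) enter to guarantee nonvanishing.

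A second, related gap is that you defer the non-spin cases to ``a suitable twisted variant'' of $D$. For linear spherical space forms this is not a side issue: $\R P^{2m}$ is never spin and many odd-dimensional lens spaces are not spin, and constructing the correct operator requires either the central extension \eqref{eq_centralSpinExtension} of $\pi_1(M)$ by $\Z_2$ coming from the unique spin structure on the universal cover (odd dimensions) or a $\Pin^{\pm}$ structure (even dimensions), together with a verification that the resulting twisted representation space on which $\tilde\eta$ is evaluated is nonzero. Note also that the paper under review does not reprove the analytic input at all: it quotes Theorems \ref{thm_bg2} and \ref{thm_bg3} from \cite{bg96} and reduces Theorem \ref{thm_linSpF} (and its extension to topological space forms) to checking these structural and group-theoretic conditions. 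Your proposal would need to supply both the correct source of nontrivial $\eta$-contributions and the non-spin machinery before it could count as a proof.
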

	Two metrics $g_0,g_1\in\Rpsc(M)$ are \emph{concordant} if there exists a metric\linebreak $g\in\Rpsc(M\times[0,1])$ which is product near the boundary and restricts to $g_i$\linebreak on $M\times\{i\}$. Metrics in the same path component of $\Rpsc(M)$ are concordant, see e.g.\ \cite{rs01}. Hence $\Rpsc(M)$ has an infinite number of path components. This also follows from the fact that a lower bound on the number of path components of $\Mpsc(M)$ is always a lower bound on the number of path components of $\Rpsc(M)$.
	
	In fact, Theorem \ref{thm_linSpF} is the consequence of two theorems proved in \cite{bg96} that can be applied to manifolds satisfying certain properties involving dimension, fundamental group and Spin or Pin structures, see Theorems \ref{thm_bg2} and \ref{thm_bg3} below. Theorem \ref{thm_linSpF} is then proved by verifying that all considered spherical space forms satisfy the requirements of one of these theorems. The Main Theorem of this article is the following, which is obtained by the same strategy:
	
	\begin{maintheorem}
		Let $M$ be a topological spherical space form of dimension at least $5$ which is not simply-connected and admits a metric with positive scalar curvature. Then there exists an infinite family of metrics $g_i\in\Rpsc(M)$ such that $g_i$ and $g_j$ are not concordant and lie in different path components of $\Mpsc(M)$ if $i\neq j$.
	\end{maintheorem}
	This affirmatively answers the question on page 11 of \cite{ks96}, where the authors determined under which conditions a topological spherical space form admits a metric with positive scalar curvature. Their main theorem is given as follows:
	
	\begin{theoremAlph}[{\cite[Main Theorem]{ks96}}]
		\label{thm_topSpFormPosCurv}
		Let $M$ be a topological spherical space form of dimension $n \geq5$. If $n\not\equiv1,2\mod 8$ then $M$ admits a metric with positive scalar curvature. If $n\equiv1,2\mod 8$ then $M$ admits a metric with positive scalar curvature if and only if $|\pi_1(M)|$ is even or the universal cover of $M$ admits a metric with positive scalar curvature.
	\end{theoremAlph}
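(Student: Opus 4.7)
The theorem combines a necessity (obstruction to PSC) and a sufficiency (construction of PSC); I would address these via the Rosenberg $\alpha$-invariant and the Gromov--Lawson surgery theorem respectively. The case analysis follows the statement: whether $n\equiv 1,2\pmod 8$ (the only dimensions admitting an $\alpha$-invariant obstruction) and, in that case, whether $|\pi_1(M)|$ is even or odd. An important background fact is that $M=\tilde M/\pi_1(M)$ is spin if and only if the free $\pi_1(M)$-action on the spin manifold $\tilde M$ lifts to a $\pi_1(M)$-equivariant spin structure; this is automatic when $|\pi_1(M)|$ is odd (since $H^*(B\pi_1(M);\Z/2)$ vanishes in positive degrees), but may fail when $|\pi_1(M)|$ is even.

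\textbf{Obstruction direction.} The only non-automatic case to rule out is $n\equiv 1,2\pmod 8$, $|\pi_1(M)|$ odd, and $\tilde M$ without PSC. Suppose for contradiction $M$ has PSC. Since $|\pi_1(M)|$ is odd, $M$ inherits a spin structure from $\tilde M$, and the Rosenberg $\alpha$-invariant $\alpha(M)\in KO_n(C^*_r\pi_1(M))$ must vanish. The augmentation $C^*_r\pi_1(M)\to\R$ is an isomorphism after localization at the prime $2$ because $|\pi_1(M)|$ is invertible mod $2$; under this identification $\alpha(M)$ is sent to $\alpha(\tilde M)\in KO_n=\Z/2$. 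Hence $\alpha(\tilde M)=0$, and Stolz's theorem (simply-connected spin, dim $\geq 5$) yields PSC on $\tilde M$, contradicting the assumption.

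\textbf{Existence direction.} When $n\not\equiv 1,2\pmod 8$, the group $KO_n(\mathrm{pt})$ is torsion-free or zero and the Dirac index vanishes on any homotopy sphere, so no $\alpha$-obstruction arises on $\tilde M$ nor on $M$. Combining the classification of spherical space form groups with the high-dimensional surgery exact sequence, one constructs a spin bordism from $M$ to a linear spherical space form $M_0$ whose handle decomposition has all handles of index $\geq 3$; since $M_0$ carries a round PSC metric, the Gromov--Lawson surgery theorem propagates PSC to $M$. When $n\equiv 1,2\pmod 8$ and $\tilde M$ carries PSC, the odd-$|\pi_1(M)|$ subcase follows as in the obstruction step run in reverse ($\alpha(M)=0$ via the $2$-local transfer identification, then PSC by the Gromov--Lawson--Rosenberg theorem for groups with periodic cohomology, established for all spherical space form groups by Rosenberg--Stolz and Botvinnik--Gilkey). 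When $|\pi_1(M)|$ is even in this range, the pivotal observation is that the $\pi_1(M)$-action on $\tilde M$ typically fails to preserve the spin structure, so $M$ is not spin; the $\alpha$-obstruction is then vacuous, and PSC is constructed by bordism from a non-spin linear model (e.g.\ $\R P^{8k+1}$, $\R P^{8k+2}$, or appropriate non-spin lens spaces, all with round PSC metrics) via Gromov--Lawson surgery in the oriented bordism category. When $M$ does happen to be spin despite $|\pi_1(M)|$ even, one uses the extra room in $KO_n(C^*_r\pi_1(M))$ coming from the $2$-torsion of $\pi_1(M)$ to show $\alpha(M)=0$ independently of $\alpha(\tilde M)$.

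\textbf{Main obstacle.} The hardest subcase is $n\equiv 1,2\pmod 8$ with $|\pi_1(M)|$ even and $M$ spin, because one can neither reduce to the universal cover (which may be a Hitchin exotic sphere without PSC) nor dispense with spin structures. I would address this by a case analysis along Wolf's classification of spherical space form groups (cyclic of even order, generalized quaternion, binary polyhedral groups and their products with odd cyclic groups), reducing to explicit computations in $KO_*(C^*_r\pi)$ and constructing PSC metrics by equivariant surgery from a chosen linear model sharing fundamental group, spin type, and universal cover with $M$.
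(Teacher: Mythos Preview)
The paper does not prove Theorem~B; it is quoted verbatim from \cite{ks96} as background for the Main Theorem, and no argument for it appears in the present paper. So there is no ``paper's own proof'' to compare your proposal against.

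That said, two comments on the proposal itself. First, the obstruction direction is over-engineered: if $M$ carries a PSC metric then its pullback to $\widetilde M$ is a PSC metric, so the ``only if'' in the $n\equiv 1,2\pmod 8$ case is immediate and needs no $\alpha$-invariant machinery.

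Second, and more substantively, the case you label the ``main obstacle'' --- $n\equiv 1,2\pmod 8$, $|\pi_1(M)|$ even, and $M$ spin --- is in fact empty. For $n\equiv 2\pmod 8$ a non-simply-connected topological spherical space form is homotopy equivalent to $\R P^n$ (Proposition~\ref{prop_cyclicSpaceForm}), hence non-orientable. For $n\equiv 1\pmod 8$ the $2$-Sylow subgroup of $\pi_1(M)$ is forced to be cyclic (Proposition~\ref{prop_dimQuat}), and then Proposition~\ref{prop_spFormOrSpin} gives that $M$ is spin only when $|\pi_1(M)|$ is odd, since $n\not\equiv 3\pmod 4$. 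Thus whenever $|\pi_1(M)|$ is even in these dimensions, $M$ is genuinely non-spin, and your proposed fallback to Wolf's classification and explicit $KO_*(C^*_r\pi)$ computations is not needed. Note also that Wolf's list covers only groups acting linearly on spheres; the class of groups acting freely on homotopy spheres is strictly larger, so the strategy of ``bordism to a linear model with the same fundamental group'' is not available in general and is not how \cite{ks96} proceeds.
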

	For the last case in Theorem \ref{thm_topSpFormPosCurv} note that by \cite{st92} the universal cover of the topological space form $M$ admits a metric with positive scalar curvature if and only if its alpha invariant vanishes.

	In lower dimensions the situation is completely different. In dimension 2 the only (topological) spherical space form which is not simply-connected is $\R P^2$. By \cite[Theorem 3.4]{rs01} the space $\Rpsc(\R P^2)$ is contractible, so both $\Rpsc(\R P^2)$ and $\Mpsc(\R P^2)$ are path-connected. 
	
	In dimension 3, by Perelman's proof of Thurston's Elliptization Conjecture (see \cite{mt14} or \cite[Theorem E]{dl09}), every 3-dimensional topological spherical space form $M^3$ is diffeomorphic to a linear spherical space form. By \linebreak\cite[Main Theorem]{ma12} the moduli space $\mathcal{M}^+(M^3)$ is path-connected. For recent work on the space of metrics $\Rpsc(M^3)$ we refer to \cite{bk19}. So dimension 4 is the only remaining open case.
	
	This article is organized as follows: In Section 2 we summarize some basic definitions and results on topological spherical space forms and prove some preliminary group-theoretic tools. In Section 3 we use these tools to prove the Main Theorem.
	
	\begin{ack}
		This article is based on the author’s master thesis, written at the Karlsruhe Institute of Technology (KIT) under the supervision of  Fernando Galaz-García. The author would like to thank him for his support. The author would also like to thank the Department of Mathematics of Shanghai Jiao Tong University for its hospitality during the preparation of this article.
	\end{ack}
	
	\section{Preliminaries}
	
	In this section we collect some definitions and basic results and develop the tools we will use in the proof of the Main Theorem.
	
	\begin{definition}
		A \emph{topological spherical space form} is a smooth manifold whose universal cover is a homotopy sphere. 
	\end{definition}
	
	Linear spherical space forms have been classified, see \cite{wo67}. This does not hold for the much larger class of topological spherical space forms and there are still many open problems, see e.g.\ \cite{ha14} for a survey.

	In the following $M$ denotes a topological spherical space form of dimension $n$. Its universal cover $\widetilde{M}$ is homeomorphic to $S^n$ and the fundamental group $\pi_1(M)$ is finite. Furthermore it has the following properties:
	\begin{proposition}
		\label{prop_spFormFundGr}
		If $n$ is even then $\pi_1(M)$ is trivial or $\Z_2$. If $n$ is odd then every Sylow subgroup of $\pi_1(M)$ is cyclic or a generalized quaternion group.
	\end{proposition}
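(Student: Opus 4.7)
The plan is to treat the two parities separately, using an Euler characteristic argument for even $n$ and the theory of periodic cohomology of finite groups for odd $n$.

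For the case $n$ even, I would use that the covering map $\widetilde{M}\to M$ is a finite covering of degree $|\pi_1(M)|$, so that $\chi(\widetilde{M})=|\pi_1(M)|\cdot\chi(M)$. Since $\widetilde{M}$ is homotopy equivalent to $S^n$ with $n$ even, the left-hand side equals $2$, forcing $|\pi_1(M)|\in\{1,2\}$ and hence $\pi_1(M)$ is trivial or $\Z_2$.

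For the case $n$ odd, the key input is that $G:=\pi_1(M)$ acts freely by deck transformations on $\widetilde{M}\simeq S^n$. From this free action one obtains that $G$ has $(n{+}1)$-periodic Tate cohomology: one way to see this is to use a CW-structure on $\widetilde{M}$ on which $G$ acts cellularly and freely, whose cellular chain complex, after suitable truncation and splicing with itself, becomes a periodic free resolution of $\Z$ over $\Z G$; alternatively, one can argue via the Serre spectral sequence of $\widetilde{M}\to M\to BG$. I would then invoke the standard algebraic fact (Cartan--Eilenberg) that a finite group $G$ has periodic cohomology if and only if every abelian subgroup of $G$ is cyclic, equivalently, if and only if every subgroup of $G$ of order $p^2$ is cyclic for every prime $p$. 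Applied Sylow-by-Sylow, this forces each odd Sylow subgroup to be cyclic, and allows only cyclic or generalized quaternion $2$-Sylow subgroups, which is exactly the statement to be proved.

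The only non-routine step is the last implication: given a finite $p$-group in which every subgroup of order $p^2$ is cyclic, one must classify it. For odd $p$ this is a short induction, but for $p=2$ this is precisely the classical theorem (due to Burnside) identifying $2$-groups with a unique subgroup of order $2$ as either cyclic or generalized quaternion, and this is where I expect to simply quote the literature (e.g.\ Brown's book on the cohomology of groups, or Wolf's classification text cited elsewhere in the paper) rather than give a self-contained argument.
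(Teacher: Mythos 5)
Your proposal is correct. For odd $n$ it follows exactly the paper's route: the free action of $\pi_1(M)$ on the homotopy sphere $\widetilde{M}$ gives periodic (Tate) cohomology, and the Cartan--Eilenberg characterization of groups with periodic cohomology yields the statement about Sylow subgroups; the paper simply cites \cite[Theorem XII.11.6]{ce56}, which already phrases the characterization in terms of cyclic or generalized quaternion Sylow subgroups, so the extra reduction you describe (from ``every subgroup of order $p^2$ is cyclic'' through Burnside's classification of $2$-groups with a unique involution) is folded into the quoted reference rather than spelled out. For even $n$ your argument differs from the paper's: you use multiplicativity of the Euler characteristic under the finite covering $\widetilde{M}\to M$, so that $2=\chi(S^n)=|\pi_1(M)|\cdot\chi(M)$ forces $|\pi_1(M)|\le 2$, whereas the paper applies the Lefschetz Fixed Point Theorem to each non-trivial deck transformation to conclude it reverses orientation, whence the product of any two non-trivial elements is trivial. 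Both are standard and correct; your version is arguably more economical (one divisibility statement instead of an argument element by element), while the paper's version additionally records the geometric fact that every non-trivial deck transformation reverses orientation, which it reuses implicitly when discussing orientability of even-dimensional space forms later on. One small point of care in your even-dimensional argument: you should note that the covering is indeed finite-sheeted (immediate here, since $\widetilde{M}$ is compact), which the paper has already established just before the proposition.
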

	A generalized quaternion group is a group $Q_{2^{k+1}}$ of order $2^{k+1}$, $k>1$, with generators $x$ and $y$ and relations
	\begin{equation}
		\label{eq_quatRep}
		x^{2^{k-1}}=y^2,\,x^{2^{k}}=1,\,yxy^{-1}=x^{-1}.
	\end{equation}
	\begin{proof}
		If $n$ is even then, by the Lefschetz Fixed Point Theorem applied to the action on the universal cover, every non-trivial element of $\pi_1(M)$ reverses the orientation. Hence, there is at most one non-trivial element in $\pi_1(M)$.
		
		If $n$ is odd then $\pi_1(M)$ has periodic cohomology by \cite[Chapter XVI.9, Application 4]{ce56}, which is equivalent to the claim, see \cite[Theorem XII.11.6]{ce56}.
	\end{proof}

	The cyclic group $\Z_p$ acts on $S^{2m-1}\subseteq\C^m$ via
	\[(k+p\Z)\cdot(z_1,\dots,z_m)=(\lambda_1^{k}z_1,\dots,\lambda_m^{k}z_m), \]
	where $\lambda_i=\lambda^{q_i}$ for $\lambda$ a primitive $p$-th root of unity and $q_i$ and $p$ are coprime. The quotient space is called a \emph{lens space}. 
	
	\begin{proposition}
		\label{prop_cyclicSpaceForm}
		If $\pi_1(M)$ is cyclic then $M$ is homotopy equivalent to a linear spherical space form, that is, $M$ is homotopy equivalent to $S^n$, $\R P^n$ (if $n$ is even) or to a lens space (if $n$ is odd).
	\end{proposition}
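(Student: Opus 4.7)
The plan is to treat $M$ uniformly via its Postnikov tower and then specialise. Since $\widetilde M$ is a homotopy $n$-sphere, $M$ has only two non-zero homotopy groups, $\pi_1=\pi_1(M)$ and $\pi_n=\Z$; as $\dim M=n$, $M$ is homotopy equivalent to the $n$-th stage of its Postnikov tower. This is a $K(\Z,n)$-fibration over $K(\pi_1,1)$ classified by a $k$-invariant $\kappa\in H^{n+1}(\pi_1;\Z^w)$, where $w\colon\pi_1\to\mathrm{Aut}(\Z)$ describes the action of deck transformations on $H_n(\widetilde M)$. Two spaces of this form are homotopy equivalent if and only if the data $(\pi_1,w,\kappa)$ agree up to an automorphism of $\pi_1$. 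If $\pi_1$ is trivial, the conclusion $M\simeq S^n$ is immediate from Whitehead's theorem.

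If $n$ is even and $\pi_1\neq 1$, Proposition \ref{prop_spFormFundGr} gives $\pi_1=\Z_2$, and the Lefschetz fixed point theorem forces the nontrivial element to act by $-1$ on $H_n(\widetilde M)$, so $w$ is the sign action and $H^{n+1}(\Z_2;\Z^-)\cong\Z_2$. The value $\kappa=0$ would make the universal cover of the Postnikov stage a $K(\Z,n)$ rather than a homotopy sphere, so $\kappa$ must be the unique nontrivial class. As $\R P^n$ fits into the same framework with the same data, the classification yields $M\simeq\R P^n$.

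If $n$ is odd and $\pi_1=\Z_p$ is cyclic of order $p$, the action of $\pi_1$ on $H_n(\widetilde M)$ is trivial: for $p$ odd this is automatic by Lefschetz, and for $p=2$ the antipodal map on $S^{2m-1}$ is orientation preserving. Hence $\kappa\in H^{n+1}(\Z_p;\Z)\cong\Z_p$, and Poincaré duality for $M$ forces $\kappa$ to be a generator (a non-generator would leave residual higher homotopy in the universal cover, violating $\widetilde M\simeq S^n$). Conversely, an explicit computation using the standard CW decomposition of $L(p;q_1,\dots,q_m)$ with $n=2m-1$ identifies its $k$-invariant with a unit of $\Z_p$ depending multiplicatively on the $q_i$, and every unit is realised in this way. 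The classification above then produces a lens space homotopy equivalent to $M$.

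The principal obstacle is the $n$ odd case: the Postnikov classification itself is formal once the orientation action on $\pi_n$ is determined, but checking that Poincaré duality constrains $\kappa$ to be a generator and computing the $k$-invariants of the standard lens spaces, while classical, require nontrivial work with the cohomology of $B\Z_p$ and of the lens spaces.
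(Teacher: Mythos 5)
The paper does not actually prove this proposition from scratch---it quotes a reference for the even-dimensional case and Wall's book, \S 14E, for the odd-dimensional one---so you are attempting the classical argument (Swan, Thomas--Wall) that lies behind those citations, and the $k$-invariant blueprint you describe is indeed the right one. However, your opening step is false: $M$ does \emph{not} have only two non-zero homotopy groups, and it is \emph{not} homotopy equivalent to the $n$-th stage of its Postnikov tower. For $k\geq 2$ one has $\pi_k(M)\cong\pi_k(\widetilde M)\cong\pi_k(S^n)$, which is non-zero for infinitely many $k>n$ (already $\pi_{n+1}(S^n)\cong\Z_2$ for $n\geq 3$); the map $M\to P_nM$ is only $(n+1)$-connected. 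Consequently the clean classification of two-stage Postnikov systems by the triple $(\pi_1,w,\kappa)$, which you invoke as the engine of the proof, applies to $P_nM$ but not to $M$ itself, and the argument as written does not conclude anything about the homotopy type of $M$.

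The strategy can be repaired, but the repair is the actual content of the theorem: given a candidate model $L$ (namely $\R P^n$ or a lens space) with matching $(\pi_1,w,\kappa)$, one builds a map $M\to L$ inducing the prescribed isomorphism on $\pi_1$ by obstruction theory over the skeleta of the $n$-dimensional complex $M$ (the intermediate obstructions vanish because $\widetilde L$ is $(n-1)$-connected), and the agreement of first $k$-invariants is precisely what allows the top-dimensional extension to be chosen of degree $\pm1$ on universal covers, so that Whitehead's theorem applies. Two smaller points: your reason for $\kappa$ being non-zero, respectively a generator, does not work as stated, since the universal cover of the Postnikov stage is $K(\Z,n)$ for \emph{every} value of $\kappa$; the standard argument is that the cellular $\Z[\pi_1]$-chain complex of $\widetilde M$ splices to a periodic free resolution of $\Z$, forcing the extension class in $H^{n+1}(\pi_1;\Z^w)$ to be a unit. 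The realization of every unit of $\Z_p$ as the $k$-invariant of some $L(p;q_1,\dots,q_m)$ is correct and standard. In short: right blueprint, matching the classical proofs the paper cites, but the load-bearing identification of $M$ with its $n$-th Postnikov stage is false and must be replaced by the obstruction-theoretic construction of a comparison map.
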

	\begin{proof}
		If $n$ is even then $M$ is a homotopy sphere or $\pi_1(M)=\Z_2$ and in the latter case $M$ is homotopy equivalent to $\R P^n$ as shown in \cite[IV.3.1]{me71}. We refer to \cite[14E]{wa99} for the case where $n$ is odd.
	\end{proof}

	We now consider $\Spin$ and $\Pin^{\pm}$ structures on topological spherical space forms. Recall that the group structure of the universal cover of the orthogonal group $\textup{O}(n)$ is uniquely determined on the component which contains the identity element (this is the $\Spin$ group), but on the other component a preimage of a reflection can square to $\pm1$ and we obtain the two groups $\Pin^\pm$. The existence of $\Spin$ and $\Pin^\pm$ structures can be characterized by Stiefel-Whitney classes as follows:
	
	\begin{proposition}[{\cite[Theorem II.1.2 and Theorem II.1.7]{lm89} and \cite[Proposition 1.1.26]{ka68}}]
		Let $M$ be a smooth manifold. Denote by $w_k(M)\in H^k(M;\Z_2)$ the $k$-th Stiefel-Whitney class of its tangent bundle. Then the following hold:
		\begin{enumerate}[(a)]
			\item $M$ is orientable if and only if $w_1(M)$ vanishes.
			\item  $M$ admits a $\Spin$ structure if and only if both $w_1(M)$ and $w_2(M)$ vanish.
			\item $M$ admits a $\Pin^+$ structure if and only if $w_2(M)$ vanishes.
			\item $M$ admits a $\Pin^-$ structure if and only if $w_2(M)+w_1(M)^2$ vanishes.
		\end{enumerate}
	\end{proposition}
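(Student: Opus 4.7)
My plan is to deduce all four statements from obstruction theory applied to the stable orthogonal tower, using the fact that $\textup{O}$, $\textup{SO}$, $\Spin$, $\Pin^+$, and $\Pin^-$ are linked by a sequence of double covers. A structure of the given type on a real vector bundle $E\to M$ is equivalent to a lift of its classifying map $f\colon M\to B\textup{O}$ to the classifying space of the corresponding cover. Since each cover is a principal $\Z_2$-bundle, the corresponding map of classifying spaces is a fibration with fiber $K(\Z_2,1)$, so a lift exists if and only if a distinguished class in $H^2(M;\Z_2)$—the pullback via $f$ of a universal class in $H^2(B\textup{O};\Z_2)$ classifying the fibration—vanishes. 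When $M$ is smooth and $f$ classifies the tangent bundle, this pullback is precisely the corresponding Stiefel--Whitney polynomial.

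Parts (a) and (b) are classical: the quotient $\textup{O}/\textup{SO}=\Z_2$ corresponds to $w_1\in H^1(B\textup{O};\Z_2)$, and the universal class for $B\Spin\to B\textup{SO}$ is the standard $w_2\in H^2(B\textup{SO};\Z_2)$. For (c) and (d), which are the main point, I would identify the universal class in
\[ H^2(B\textup{O};\Z_2)\cong \Z_2\langle w_2\rangle\oplus \Z_2\langle w_1^2\rangle \]
classifying $B\Pin^{\pm}\to B\textup{O}$. Such a class has the form $\alpha w_2+\beta w_1^2$, and the task reduces to determining $\alpha,\beta\in\Z_2$ in each of the two cases.

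The key calculation is to test on two easy examples. First, restrict along $B\textup{O}(1)=\R P^\infty$, where $w_2=0$ and $w_1^2$ generates $H^2$; the obstruction now measures whether the tautological line bundle admits a $\Pin^\pm$-structure, i.e.\ whether the monodromy $\Z_2=\pi_1(\R P^\infty)\to\textup{O}(1)$ lifts to $\Pin^\pm(1)$. A direct group-theoretic computation gives $\Pin^+(1)\cong \Z_2\oplus\Z_2$ (the double cover of $\textup{O}(1)$ splits, as reflections square to $+1$) and $\Pin^-(1)\cong \Z_4$ (no splitting, as reflections square to $-1$). Hence the $\Pin^+$-obstruction vanishes on $B\textup{O}(1)$, forcing $\beta=0$, while the $\Pin^-$-obstruction is nonzero there, forcing $\beta=1$. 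Second, restrict along $B\textup{SO}\to B\textup{O}$: since $w_1=0$ on $B\textup{SO}$ and a $\Pin^\pm$-structure on an oriented bundle is the same as a $\Spin$-structure, the pullback of the universal obstruction must equal $w_2$, giving $\alpha=1$ in both cases. Combining these, the $\Pin^+$-class is $w_2$ and the $\Pin^-$-class is $w_2+w_1^2$, which are (c) and (d).

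The main obstacle is the group-theoretic identification of $\Pin^\pm(1)$; this hinges on the two sign conventions $v\cdot v=\pm|v|^2$ in the Clifford algebra, and one must match the sign with the correct label of $\Pin^\pm$. Once this convention is pinned down the rest of the argument is formal obstruction theory, and the conclusion agrees with the references \cite{lm89, ka68} cited in the statement.
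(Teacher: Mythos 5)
The paper offers no proof of this proposition: it is stated as a known result with citations to Lawson--Michelsohn and Karoubi, so there is nothing internal to compare your argument against. Your obstruction-theoretic proof is correct and is essentially the standard derivation found in those references (and in Kirby--Taylor's account of $\Pin$ structures). The structure of the argument is sound: each of $\SO$, $\Spin$, $\Pin^\pm$ sits over $\textup{O}$ (or $\SO$) via a central $\Z_2$-extension, so the induced map of classifying spaces is a principal $K(\Z_2,1)$-fibration classified by a universal class in $H^1$ or $H^2$ of the base, and a lift of the classifying map of $TM$ exists if and only if the pullback of that class vanishes --- with no higher obstructions, precisely because the fibration is principal with fiber an Eilenberg--MacLane space. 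Your two test restrictions do jointly detect $H^2(B\textup{O};\Z_2)\cong\Z_2\langle w_2\rangle\oplus\Z_2\langle w_1^2\rangle$, since $w_2$ dies and $w_1^2$ survives on $B\textup{O}(1)$ while the reverse happens on $B\SO$; and the group-theoretic input $\Pin^+(1)\cong\Z_2\oplus\Z_2$, $\Pin^-(1)\cong\Z_4$ matches the convention used in this paper (a preimage of a reflection squares to $+1$ in $\Pin^+$ and to $-1$ in $\Pin^-$), which is exactly what makes the $\Pin^+$ obstruction $w_2$ and the $\Pin^-$ obstruction $w_2+w_1^2$. The one point you rightly flag --- matching the Clifford sign convention $v\cdot v=\pm|v|^2$ to the labels $\Pin^\pm$ --- is the only place where an error could creep in, and your resolution is consistent with the statement being proved.
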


	Now suppose that $M$ is oriented. We equip $\widetilde{M}$ with a Riemannian metric which is invariant under the action of $G=\pi_1(M)$, so the action lifts to the oriented orthonormal frame bundle $P_\SO(\widetilde{M})$. The manifold $\widetilde{M}$ is a homotopy sphere, hence it admits a unique Spin structure $P\to P_\SO(\widetilde{M})$ and every $g\in G$ has two lifts to $P$. Denote the group of all such lifts by $\mathcal{G}$. This leads to a group extension
	\begin{equation}
		\label{eq_centralSpinExtension}
		1\longrightarrow\Z_2\longrightarrow\mathcal{G}\overset{\pi}{\longrightarrow} G\longrightarrow 1.
	\end{equation}
	Both elements of $\ker\pi$ are central in $\mathcal{G}$, so the extension is central. In order to analyze this extension we need two group-theoretic lemmas.
	
	\begin{lemma}
		\label{lemma_semiDirProd}
		Let $G$ be a finite group with a cyclic 2-Sylow subgroup $S$. Let $m\cdot 2^k=|G|$, where $m$ is odd. Then $G$ has a unique normal subgroup of order $m$.
	\end{lemma}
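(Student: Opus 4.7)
The plan is to prove this by induction on the exponent $k$, using the classical ``sign of the regular representation'' trick going back to Burnside/Cayley.

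\textbf{Base case.} If $k=0$, then $|G|=m$ is odd and $G$ itself is the unique subgroup (in particular, the unique normal subgroup) of order $m$.

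\textbf{Inductive step.} Assume $k\ge 1$ and the statement holds for all smaller powers. Let $x$ be a generator of the cyclic $2$-Sylow subgroup $S$, so $x$ has order $2^k$. Consider the left regular action of $G$ on itself, viewed as a homomorphism $\rho\colon G\to \mathrm{Sym}(G)$. Multiplication by $x$ partitions $G$ into cosets of $\langle x\rangle$, each of which is a single cycle of length $2^k$; hence $\rho(x)$ is a product of exactly $m$ disjoint $2^k$-cycles. Since a $2^k$-cycle with $k\ge 1$ is an odd permutation and $m$ is odd, $\rho(x)$ has sign $-1$. The composition $\mathrm{sgn}\circ\rho\colon G\to\{\pm1\}$ is therefore surjective, and I let $H$ denote its kernel. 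Then $H\trianglelefteq G$ has index $2$, so $|H|=m\cdot 2^{k-1}$, and its $2$-Sylow subgroup $S\cap H$ is the unique index-$2$ subgroup of $S$, which is cyclic of order $2^{k-1}$.

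By the induction hypothesis applied to $H$, there is a unique normal subgroup $N\trianglelefteq H$ of order $m$. Being the unique subgroup of its order in $H$, $N$ is characteristic in $H$; since $H\trianglelefteq G$, this forces $N\trianglelefteq G$. This gives existence of a normal subgroup of order $m$ in $G$.

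\textbf{Uniqueness in $G$.} Suppose $N'\trianglelefteq G$ has order $m$. Then $N'$ has odd order, while $G/N$ has order $2^k$, so the image of $N'$ under $G\twoheadrightarrow G/N$ has order dividing $\gcd(m,2^k)=1$. Hence $N'\subseteq N$, and comparing cardinalities gives $N'=N$.

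The only real subtlety is the sign computation in the inductive step — making sure one is counting cycles correctly — but once that is set up, induction combined with the characteristic-subgroup observation does all the work. I do not expect any serious obstacle.
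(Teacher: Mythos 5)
Your proof is correct and follows essentially the same route as the paper's: the sign of the left regular representation produces an index-$2$ normal subgroup $H$ with cyclic $2$-Sylow of order $2^{k-1}$, induction gives the normal subgroup of order $m$ in $H$, and its uniqueness promotes normality to $G$. You spell out the cycle-count for the sign and phrase the last step via characteristic subgroups rather than conjugates, but these are cosmetic differences.
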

	\begin{proof}
		If such a subgroup exists then it is unique as it contains precisely all elements of odd order. Now the homomorphism $\ell\colon G\to\textup{Sym}(G)$ given by left-multiplication followed by $\textup{sign}\colon\textup{Sym}(G)\to\Z_2$ is surjective as every generator of $S$ has non-trivial image. Hence its kernel $H$ has index $2$ and $H\cap S$ is a cyclic 2-Sylow subgroup of order $2^{k-1}$ in $H$. Suppose $H$ has a unique normal subgroup $N$ of order $m$. Every conjugate of $N$ is contained in $H$ as $H$ is normal and, by uniqueness, it follows that it equals $N$, so $N$ is normal in $G$. Thus, the claim follows by induction.
	\end{proof}
	\begin{lemma}
		\label{l_centrZ2ext}
		Let $G$ be a finite group and let $m\cdot 2^k=|G|$, where $m$ is odd. If $G$ has a normal subgroup $N$ of order $m$ then the following hold:
		\begin{enumerate}[(a)]
			\item The group $G$ is the semi-direct product of $N$ and a 2-Sylow subgroup.
			\item The group $\mathcal{G}$ in the extension \eqref{eq_centralSpinExtension} has a normal subgroup of order $m$ that maps isomorphically to $N$.
			\item The extension \eqref{eq_centralSpinExtension} splits if and only if its restriction to a 2-Sylow subgroup splits.
		\end{enumerate}
	\end{lemma}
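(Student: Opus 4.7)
The plan is to handle the three parts in order, using the fact that $m$ and $2^k$ are coprime throughout.

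For part (a), I would pick any $2$-Sylow subgroup $S \leq G$. Since $|N| = m$ is odd and $|S| = 2^k$, we have $N \cap S = \{1\}$. Combined with normality of $N$, this gives that $NS$ is a subgroup of order $|N||S|/|N\cap S| = m \cdot 2^k = |G|$, so $G = NS$. Normality of $N$ then identifies $G$ with the internal semidirect product $N \rtimes S$.

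For part (b), the key idea is to look at the preimage $\pi^{-1}(N) \leq \mathcal{G}$. This is a normal subgroup of order $2m$ sitting in a central extension
\begin{equation*}
1 \longrightarrow \Z_2 \longrightarrow \pi^{-1}(N) \longrightarrow N \longrightarrow 1.
\end{equation*}
Since $\gcd(2,m) = 1$, the Schur--Zassenhaus theorem provides a splitting; because the extension is central and the orders are coprime, this splitting is in fact a direct product decomposition $\pi^{-1}(N) \cong \Z_2 \times N'$ with $N' \cong N$ via $\pi$. The subgroup $N'$ can be characterised intrinsically as the set of elements of odd order in $\pi^{-1}(N)$, so it is characteristic in $\pi^{-1}(N)$. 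Since $\pi^{-1}(N)$ is itself normal in $\mathcal{G}$, it follows that $N' \trianglelefteq \mathcal{G}$. The main subtlety here is making sure to argue normality in $\mathcal{G}$ (not merely in $\pi^{-1}(N)$) by invoking the characteristic property.

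For part (c), the forward direction is immediate by restriction. For the converse, suppose the extension splits over a $2$-Sylow subgroup $S \leq G$, i.e.\ there is a subgroup $S' \leq \mathcal{G}$ with $\pi\vert_{S'}\colon S' \xrightarrow{\cong} S$. Using $N'$ from part (b), I would consider the product $N' S' \leq \mathcal{G}$, which is a subgroup since $N'$ is normal. Because $|N'| = m$ is odd and $|S'| = 2^k$, one gets $N' \cap S' = \{1\}$, hence $|N'S'| = m \cdot 2^k = |G|$. To conclude that $\pi\vert_{N'S'}$ is injective, I would take $g = n's' \in (\ker \pi) \cap N'S'$ and observe that $\pi(n') = \pi(s')^{-1}$ lies in $N \cap S = \{1\}$ by part (a), forcing $n' \in \ker\pi \cap N' = \{1\}$ and $s' \in \ker\pi \cap S' = \{1\}$. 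Thus $\pi\vert_{N'S'}$ is an isomorphism onto $G$, giving the desired splitting.

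The main obstacle I anticipate is part (b): one has to carefully distinguish between splittings of the restricted extension and the identification of the complement as a subgroup characteristic enough to remain normal in the ambient group $\mathcal{G}$. Once part (b) is in place, part (c) is essentially a coprime-order bookkeeping argument that pieces together the Sylow splitting with the odd-order normal complement.
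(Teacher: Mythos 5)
Your proof is correct, and while the overall strategy (produce a normal odd-order complement $\mathcal{N}\cong N$ upstairs, then glue it to the Sylow splitting) matches the paper's, two of the three steps run through different mechanisms. In part (b) the paper invokes its Lemma \ref{lemma_semiDirProd} (the normal $2$-complement statement for cyclic Sylow $2$-subgroups, applied to $\pi^{-1}(N)$, whose Sylow $2$-subgroups have order $2$), whereas you use Schur--Zassenhaus together with centrality of the kernel to get the direct product $\pi^{-1}(N)\cong\Z_2\times N'$; both are valid, and both then conclude normality in $\mathcal{G}$ the same way, by identifying $N'$ with the elements of odd order. In part (c) the paper defines a section $\phi\colon N\rtimes S\to\mathcal{N}\rtimes\mathcal{S}$ piecewise and must verify it is a homomorphism by checking compatibility with the conjugation actions; you instead form the subgroup $N'S'$ (a subgroup because $N'\trianglelefteq\mathcal{G}$), show $\pi$ restricts to an isomorphism $N'S'\to G$ by an order count plus triviality of $\ker\pi\cap N'S'$, and take the inverse as the splitting. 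Your version of (c) is arguably cleaner since the homomorphism property comes for free from working with an actual subgroup, at the cost of the small injectivity computation; the paper's version makes the semidirect-product structure of the splitting explicit, which is what it reuses later in the proof of the Main Theorem.
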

	\begin{proof}	
		Consider the projection $G\to G/N$. Its restriction to a 2-Sylow subgroup $S$ is an isomorphism, hence the projection splits.
		
		Now consider the extension \eqref{eq_centralSpinExtension}.	The 2-Sylow subgroups of $\pi^{-1}(N)$ have order 2, so we can apply Lemma \ref{lemma_semiDirProd} to see that $\pi^{-1}(N)$ has a normal subgroup $\mathcal{N}$ of order $m$ which maps isomorphically to $N$ under $\pi$. The group $\mathcal{N}$ is normal in $\mathcal{G}$ as it contains all elements of odd order.
		
		 It follows from $(a)$ that $\mathcal{G}$ is the semi-direct product of $\mathcal{N}$ and $\mathcal{S}=\pi^{-1}(S)$. If there is a splitting $S\to\mathcal{S}$, then we obtain a map
		\[\phi\colon G=N\rtimes S\longrightarrow\mathcal{N}\rtimes\mathcal{S}=\mathcal{G}\]
		by identifying $N$ with $\mathcal{N}$. The map $\phi$ clearly satisfies $\pi\circ\phi=\textup{id}$. Hence, it defines a splitting of \eqref{eq_centralSpinExtension} if it is a group homomorphism. The image $\phi(sns^{-1})$ equals $\phi(s)\phi(n)\phi(s)^{-1}$ for $n\in N$, $s\in S$ as both are elements of $\mathcal{N}$ and map to $sns^{-1}$. Hence $\phi$ commutes with the action of $S$ on $N$, thus it is a homomorphism on their semi-direct product $G$.
	\end{proof}
	
	The manifold $M$ admits a Spin structure if and only if the whole action of $G=\pi_1(M)$ on $P_\SO(\widetilde{M})$ can be lifted to $P$. This is the case if and only if the extension \eqref{eq_centralSpinExtension} splits, i.e. if and only if $\mathcal{G}=\Z_2\oplus G$ (as the extension is central). This leads to the following:
	
	\begin{proposition}
		\label{prop_spFormOrSpin}
		The topological spherical space form $M$ is orientable if and only if $n$ is odd or $G$ is trivial. If $n$ is odd and the 2-Sylow subgroups of $G$ are cyclic then $M$ admits a $\Spin$ structure if and only if $n\equiv 3\mod 4$ or $|G|$ is odd.
	\end{proposition}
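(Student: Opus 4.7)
The orientability claim follows from the Lefschetz fixed point theorem. For $n$ even, the argument in the proof of Proposition \ref{prop_spFormFundGr} shows that every non-trivial $g \in G$ reverses orientation on $\widetilde{M}$, hence $M$ is orientable if and only if $G$ is trivial. For $n$ odd, the Lefschetz number of such a $g$ equals $1 - \deg(g)$, and freeness forces $\deg(g) = 1$, so every element of $G$ acts by an orientation-preserving diffeomorphism and $M$ is orientable.

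For the Spin part, assume $n$ is odd and the 2-Sylow subgroup $S$ of $G$ is cyclic. Lemma \ref{lemma_semiDirProd} yields a normal subgroup of $G$ of odd order equal to the odd part of $|G|$, so Lemma \ref{l_centrZ2ext}(c) applies: the extension \eqref{eq_centralSpinExtension} splits if and only if its restriction to $S$ splits, that is, if and only if $\widetilde{M}/S$ admits a Spin structure. If $|G|$ is odd then $S$ is trivial and the conclusion is immediate, so assume $|G|$ is even and write $S = \Z_{2^k}$ with $k \geq 1$. Then $\widetilde{M}/S$ is an $n$-dimensional topological spherical space form with cyclic fundamental group $\Z_{2^k}$.

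By Proposition \ref{prop_cyclicSpaceForm}, $\widetilde{M}/S$ is homotopy equivalent to a linear lens space $L = L^n(2^k; q_1, \ldots, q_m)$ with $n = 2m-1$. Since both manifolds are oriented and closed, Wu's formula gives $w_2 = v_2 + \textup{Sq}^1 v_1 = v_2$, and as the Wu class $v_2$ is a homotopy invariant, $\widetilde{M}/S$ is Spin if and only if $L$ is Spin. For $L$, the standard identification $TL \oplus \R \cong \bigoplus_{i=1}^m \eta^{q_i}$, where $\eta$ is the canonical complex line bundle associated to the standard generator of $\pi_1(L)$, together with $w_2(\eta^{q}) \equiv q \cdot c_1(\eta) \pmod{2}$, yields $w_2(L) = \bigl(\sum_i q_i\bigr)\, \bar{c}_1$, where $\bar{c}_1$ denotes the mod $2$ reduction of $c_1(\eta)$ and generates $H^2(L; \Z_2) \cong \Z_2$. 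Each $q_i$ is coprime to $2^k$ and hence odd, so $\sum_i q_i \equiv m \pmod{2}$, and therefore $w_2(L) = 0$ if and only if $m$ is even, that is, if and only if $n \equiv 3 \pmod{4}$.

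The main non-trivial step is the passage from $\widetilde{M}/S$ to the linear lens space $L$: while $w_2$ is not \emph{a priori} a homotopy invariant of smooth manifolds, the observation that $w_2 = v_2$ on oriented closed manifolds, combined with the homotopy invariance of the Wu class $v_2$, bridges the topological setting to the explicit bundle-theoretic computation on $L$.
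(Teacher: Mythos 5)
Your proof is correct and follows essentially the same route as the paper's: reduce to the 2-Sylow subgroup via Lemmas \ref{lemma_semiDirProd} and \ref{l_centrZ2ext}, identify $\widetilde{M}/S$ with a lens space up to homotopy equivalence, and use the homotopy invariance of $w_2$ together with the spin criterion for lens spaces. The only differences are that you obtain non-orientability for even $n$ directly from the Lefschetz orientation-reversal argument rather than via the homotopy equivalence with $\R P^n$, and you compute $w_2$ of the lens space from the splitting $TL\oplus\R\cong\bigoplus_i\eta^{q_i}$ instead of citing Lemma \ref{l_lensSpSpin} --- both substitutions are valid.
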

	We will use the following lemma in the proof of the proposition.
	\begin{lemma}[{\cite[Theorem 1]{fr87}}]
		\label{l_lensSpSpin}
		If $M$ is a lens space of dimension $n=2m-1$ then $M$ is spin if and only if $m$ is even or $|\pi_1(M)|$ is odd.
	\end{lemma}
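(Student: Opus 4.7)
The plan is to compute the Stiefel--Whitney classes of $M=S^{2m-1}/\Z_p$ directly from an explicit decomposition of its tangent bundle. The outward radial vector field gives a $\Z_p$-equivariant isomorphism of real vector bundles $TS^{2m-1}\oplus\varepsilon^1\cong \underline{\C}^m$, where $\Z_p$ acts diagonally on $\underline{\C}^m$ with weights $q_1,\dots,q_m$. Passing to the quotient yields
\[ TM\oplus\varepsilon^1 \;\cong\; L_{q_1}\oplus\cdots\oplus L_{q_m}, \]
where $L_q\to M$ is the complex line bundle associated to the character $k\mapsto\lambda^{qk}$ of $\Z_p$. Each $L_{q_i}$ is a complex line bundle, hence real oriented of rank $2$, so $w_1(L_{q_i})=0$ and $w_2(L_{q_i})=c_1(L_{q_i})\bmod 2$.

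The Whitney sum formula together with the stability of Stiefel--Whitney classes then gives $w_1(M)=0$ and
\[ w_2(M) \;=\; \Big(\sum_{i=1}^m c_1(L_{q_i})\Big)\bmod 2 \;=\; \Big(\sum_{i=1}^m q_i\Big)\cdot c_1(L_1)\bmod 2, \]
using $L_q=L_1^{\otimes q}$ and the additivity of $c_1$ under tensor products. It therefore suffices to analyze $H^2(M;\Z_2)$ and the mod-$2$ reduction of $c_1(L_1)$. Since $H_1(M;\Z)\cong\Z_p$ and $H_2(M;\Z)=0$ (standard for lens spaces with $m\geq 2$; the case $m=1$ is vacuous as $\dim M=1$), the universal coefficient theorem gives $H^2(M;\Z_2)\cong\textup{Ext}(\Z_p,\Z_2)$, which is $0$ if $p$ is odd and $\Z_2$ if $p$ is even.

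If $p=|\pi_1(M)|$ is odd, then $H^2(M;\Z_2)=0$, so $w_2(M)=0$ automatically and $M$ is spin by the Stiefel--Whitney characterization recalled in the excerpt. If $p$ is even, then each $q_i$ is coprime to $p$ and hence odd, so the formula above reduces to $w_2(M)=m\cdot c_1(L_1)\bmod 2$, where $c_1(L_1)\bmod 2$ generates $H^2(M;\Z_2)\cong\Z_2$. This vanishes if and only if $m$ is even, completing the proof. The main subtlety is the tangent-bundle decomposition itself---in particular, that the equivariant splitting on $S^{2m-1}$ descends to $M$ with the claimed line bundles, and that $c_1$ is natural under tensor powers---after which the statement reduces to the mod-$2$ arithmetic above.
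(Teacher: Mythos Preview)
The paper does not supply its own proof of this lemma; it is simply quoted from \cite{fr87}. Your argument is correct and is in fact the standard one (and essentially the computation carried out in the cited reference): decompose $TM\oplus\varepsilon^1$ as a sum of the complex line bundles $L_{q_i}$ coming from the $\Z_p$-characters, read off $w_2(M)$ as $\big(\sum_i q_i\big)c_1(L_1)\bmod 2$, and then use that $H^2(M;\Z_2)$ vanishes for $p$ odd while for $p$ even all $q_i$ are odd and $c_1(L_1)\bmod 2$ is the nonzero class.

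The only point worth tightening is the assertion that $c_1(L_1)\bmod 2$ is nonzero when $p$ is even. You invoke it but do not justify it. One clean way: the classifying map of the cover $S^{2m-1}\to M$ identifies $M$ with the $(2m-1)$-skeleton of $B\Z_p$, and under this identification $c_1(L_1)$ is the image of the canonical generator of $H^2(B\Z_p;\Z)\cong\Z_p$; hence $c_1(L_1)$ generates $H^2(M;\Z)\cong\Z_p$ and its mod-$2$ reduction generates $H^2(M;\Z_2)\cong\Z_2$. With that sentence added, the proof is complete.
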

	\begin{proof}[Proof of Proposition \ref{prop_spFormOrSpin}]
		All homotopy spheres are orientable and spin, so we can assume that $G$ is non-trivial. If $n$ is even, then $M$ is homotopy equivalent to $\R P^n$ which is not orientable. Since Stiefel-Whitney classes are invariant under homotopy equivalence it follows that $M$ is not orientable. If $n$ is odd, then the action of $G$ on $\widetilde{M}$ preserves the orientation as it induces the identity on $H_n(\widetilde{M};\Q)$ by the Lefschetz Fixed Point Theorem. Hence the quotient $M$ is orientable.
		
		Now let $n=2m-1$ be odd and assume that $G$ has a cyclic 2-Sylow subgroup $S$. We consider the central extension \eqref{eq_centralSpinExtension}. By Lemma \ref{lemma_semiDirProd} there is a normal subgroup of maximal odd order and we can apply Lemma \ref{l_centrZ2ext} to see that the extension splits if and only if its restriction to $S$ splits. The quotient of the action of $S$ on $\widetilde{M}$ has cyclic fundamental group, so it is homotopy equivalent to a lens space by Proposition \ref{prop_cyclicSpaceForm}. Thus, by using the homotopy invariance of Stiefel-Whitney classes, it follows from Lemma \ref{l_lensSpSpin} that the extension splits if and only if $m$ is even or $S$ is trivial, i.e.\ $M$ admits a Spin structure if and only if $m$ is even or $|G|$ is odd.	
	\end{proof}
	\begin{remark}
		Proposition \ref{prop_spFormOrSpin} remains true if $S$ is a generalized quaternion group (cf.\ e.g.\ proof of Theorem 2.1 in \cite{ks90}). This result cannot be obtained by the same method, however, as there are central extensions of $Q_8$ by $\Z_2$ that do not split but every restriction to a cyclic subgroup splits; these are semi-direct products of $\Z_4$ and $\Z_4$. On the other hand there are no central extensions of $Q_8$ by $\Z_2$ where every element has twice the order of its image as we see in the following proposition.
	\end{remark}
	\begin{proposition}
		\label{prop_dimQuat}
		The group $Q_{2^{k+1}}$ cannot act smoothly and freely on a homotopy sphere of dimension $n\not\equiv 3\mod 4$. In particular, topological spherical space forms of dimension $n\equiv1\mod 4$ have cyclic 2-Sylow subgroups.
	\end{proposition}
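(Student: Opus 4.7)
The plan is to split on the parity of $n$, dispose of the even case immediately via Proposition \ref{prop_spFormFundGr}, and then reduce the case $n\equiv 1\mod 4$ to a group-theoretic impossibility about a central $\Z_2$-extension of $Q_8$, using the already-proved lens-space obstruction as the key input.

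For $n$ even, a free action of $Q_{2^{k+1}}$ on a homotopy sphere would produce a topological spherical space form with $|\pi_1|=2^{k+1}\geq 8$, contradicting Proposition \ref{prop_spFormFundGr}. So suppose $n=2m-1$ with $m$ odd and that $Q_{2^{k+1}}$ acts smoothly and freely on a homotopy sphere $\widetilde M$. I would immediately restrict to the subgroup $Q=\langle x^{2^{k-2}},y\rangle$ of $Q_{2^{k+1}}$; a direct check of the defining relations (together with $|x^{2^{k-2}}|=4$) shows that $Q\cong Q_8$, and the construction is valid because the definition of $Q_{2^{k+1}}$ requires $k\geq 2$ (with $Q$ equal to the whole group when $k=2$). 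Restricting the central extension \eqref{eq_centralSpinExtension} to $Q$ gives a central extension $1\to\Z_2\to\mathcal Q\to Q\to 1$.

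The main input is a doubling property for $\mathcal Q\to Q$: for every nontrivial cyclic subgroup $C\leq Q$, the quotient $\widetilde M/C$ is a topological spherical space form of odd dimension $n=2m-1$ with cyclic fundamental group, hence homotopy equivalent to a lens space by Proposition \ref{prop_cyclicSpaceForm}. Since $m$ is odd and $|C|\in\{2,4\}$ is even, Lemma \ref{l_lensSpSpin} tells us this lens space is not spin, so the restriction of the extension to $C$ is a non-split central $\Z_2$-extension of $C$. In such a non-split extension the total space is cyclic of order $2|C|$, hence every lift in $\mathcal Q$ of a nontrivial element $c\in Q$ has order exactly $2|c|$.

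It then remains to show no such $\mathcal Q$ exists, which is the step I expect to be the main obstacle. I would choose lifts $a,b\in\mathcal Q$ of $i,j\in Q_8$; by the doubling property $|a|=|b|=8$ and $a^4=b^4=\epsilon$, the nontrivial element of $\ker\pi$. Then $b^2$ lifts $-1$ and $bab^{-1}$ lifts $i^{-1}$, so $b^2\in\{a^2,a^6\}$ and $bab^{-1}\in\{a^3,a^7\}$. A short calculation modulo $8$ shows that $(bab^{-1})^2=a^6$ and $(bab^{-1})^6=a^2$ regardless of which of the two possible lifts of $i^{-1}$ occurs; comparing with the tautology $b\cdot b^2\cdot b^{-1}=b^2$ yields $a^2=a^6$ in both cases for $b^2$, contradicting $|a|=8$. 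The trickiest part is juggling the two sign ambiguities consistently, but the cancellation of the ambiguity in the conjugation formulas is what makes the argument go through. Once the contradiction is established, the ``in particular'' statement is immediate from Proposition \ref{prop_spFormFundGr}: in dimension $n\equiv 1\mod 4$ the $2$-Sylow subgroups of $\pi_1(M)$ are cyclic or generalized quaternion, and the second possibility has just been ruled out.
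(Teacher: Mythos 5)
Your proof is correct and follows essentially the same route as the paper: reduce to odd dimension via Proposition \ref{prop_spFormFundGr}, use the lens-space spin criterion (Lemma \ref{l_lensSpSpin}) to force every lift in the central spin extension to have twice the order of its image, and then derive a contradiction from the quaternion relations by conjugating the square of a lift of an order-4 element. Your preliminary restriction to the $Q_8$ subgroup $\langle x^{2^{k-2}},y\rangle$ is a cosmetic simplification — the paper runs the identical computation directly in $Q_{2^{k+1}}$ with $\tilde{x}^{2^{k-1}}$ playing the role of your $a^2$.
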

	\begin{proof}
		By Proposition \ref{prop_spFormFundGr} we can assume that $n$ is odd and we again consider the extension \eqref{eq_centralSpinExtension}, where $G=Q_{2^{k+1}}$. Now let $g\in G$ be non-trivial and suppose $n\equiv 1\mod 4$. Then, by restricting the action to the cyclic subgroup generated by $g$, we obtain that both elements of $\pi^{-1}(g)$ have twice the order of $g$ as the extension does not split in this case. Denote by $\tilde{x}$ and $\tilde{y}$ preimages of the generators $x$ and $y$ of $G$, in particular $\tilde{x}$ has order $2^{k+1}$. By using the relations \eqref{eq_quatRep} we obtain:
		\[\tilde{x}^{2^{k-1}}=\pm \tilde{y}^2\text{ and }\tilde{y}\tilde{x}\tilde{y}^{-1}=\pm \tilde{x}^{-1}. \]
		Hence, 
		\[\tilde{x}^{-2^{k-1}}=\tilde{y}\tilde{x}^{2^{k-1}}\tilde{y}^{-1}=\pm \tilde{y}^{2}=\tilde{x}^{2^{k-1}},\]
		so $\tilde{x}^{2^k}$ is trivial, which is a contradiction.
	\end{proof}

	\section{Proof of the Main Theorem}
	
	Our Main Theorem will be a consequence of the following theorems:
	
	\begin{theorem}[{\cite[Theorem 0.2 and Theorem 1.1]{bg96}}]
		\label{thm_bg2}
		Let $M$ be a closed connected manifold of odd dimension $n=2m-1\geq 5$ with finite fundamental group $G$ and assume that its universal cover admits a $\Spin$ structure. Consider the central extension \eqref{eq_centralSpinExtension} given by
		\[1\longrightarrow\Z_2\longrightarrow \mathcal{G}\longrightarrow G\longrightarrow 1 \]
		and assume that the following hold:
		\begin{enumerate}[(a)]
			\item The group $\mathcal{G}$ contains an element $g\neq\pm1$ which is not conjugate to either $-g$ or to $-g^{-1}$ if $m$ is even.
			\item The group $\mathcal{G}$ contains an element $g$ which is not conjugate to either $-g$ or to $g^{-1}$ if $m$ is odd.
		\end{enumerate}
		Then, if $M$ admits a metric with positive scalar curvature, there exists an infinite family $g_i\in\Rpsc(M)$ such that $g_i$ and $g_j$ are not concordant and lie in different path components of $\Mpsc(M)$ if $i\neq j$.
	\end{theorem}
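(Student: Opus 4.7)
The plan is to detect infinitely many concordance classes of PSC metrics using the Atiyah--Patodi--Singer eta-invariant of twisted Dirac operators on $M$, in the spirit of Botvinnik--Gilkey. Because $\widetilde M$ is Spin and $\mathcal G$ acts on its Spin frame bundle lifting the $G$-action on $P_{\SO}(\widetilde M)$, every unitary representation $\rho\colon\mathcal G\to U(N)$ with $\rho(-1)=-\mathrm{id}$ descends to a flat twisted spinor bundle on $M$ and defines a Dirac operator $D_\rho$. For any $g\in\Rpsc(M)$ the Lichnerowicz formula forces $\ker D_\rho=0$, so the reduced eta-invariant $\tilde\eta_\rho(g):=\eta_\rho(g)-N\eta_1(g)$ is well-defined, and the APS index theorem applied to PSC metrics on $M\times[0,1]$ shows that $\tilde\eta_\rho\pmod{\Z}$ is an invariant of the concordance class of $g$.

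From the hypothesised $g_0\in\Rpsc(M)$ I would construct the family $g_i$ by iterating a single PSC Spin bordism $W$ from $M$ to $M$ representing a suitable class in a Spin bordism group of $B\mathcal G$: attach $W$ to $M\times\{1\}$ to obtain $g_1$, attach another copy to obtain $g_2$, and so on. The Gromov--Lawson surgery theorem keeps each $g_i$ PSC, and the APS index formula for the attached cylinder computes the difference $\tilde\eta_\rho(g_i)-\tilde\eta_\rho(g_{i-1})$ as a fixed linear combination $\alpha_\rho$ of character values of $\rho$ on conjugacy classes of $\mathcal G\setminus\{\pm 1\}$. Provided $\alpha_\rho$ can be arranged to be irrational modulo $\Z$, the values $\tilde\eta_\rho(g_i)=\tilde\eta_\rho(g_0)+i\alpha_\rho$ are pairwise distinct modulo $\Z$, and the $g_i$ are pairwise non-concordant.

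The hard step is producing such a $W$ and $\rho$, and this is precisely where the hypotheses (a) and (b) enter. The parity of $m$ controls the real/quaternionic structure of the spin representation and so imposes a symmetry on the class functions on $\mathcal G$ contributing to $\alpha_\rho$: for $m$ even the APS contribution is invariant under the involution $g\mapsto -g^{-1}$ (combined with $g\mapsto -g$ coming from $\rho(-1)=-\mathrm{id}$), and for $m$ odd under $g\mapsto g^{-1}$ (again combined with $g\mapsto -g$). The space of virtual characters whose symmetrised sum vanishes is parametrised by the symmetry-fixed conjugacy classes; hence an element $g\in\mathcal G$ which is not conjugate to any of its images under the symmetry -- exactly the content of (a) or (b) -- yields a virtual representation $\rho$ with non-vanishing $\alpha_\rho$, and the standard irrationality of character values on non-trivial roots of unity then produces an $\alpha_\rho$ irrational modulo $\Z$.

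To upgrade non-concordance to the moduli-space statement, I would finally observe that $\Diff(M)$ permutes the finite set of isomorphism classes of unitary representations of $\mathcal G$ of any fixed dimension, so $\varphi^*g_i$ can only convert $\tilde\eta_\rho(g_i)$ into $\tilde\eta_{\rho'}(g_i)$ for $\rho'$ in a finite orbit. A pigeonhole argument applied to the infinite set of pairwise distinct eta-invariants then extracts an infinite subfamily pairwise inequivalent in $\Mpsc(M)$.
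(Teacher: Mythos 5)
First, a framing remark: the paper does not prove this statement at all --- it is quoted verbatim from Botvinnik--Gilkey \cite{bg96} and used as a black box --- so what you are reconstructing is their argument. Your outline does follow its broad architecture: twisted Dirac operators attached to representations $\rho$ of $\mathcal G$ with $\rho(-1)=-\mathrm{id}$ (which is exactly what makes the twisted spinor bundle descend to the non-spin $M$), vanishing of kernels by Lichnerowicz, APS to get a concordance invariant, a bordism/surgery construction of the family, and a finite-orbit pigeonhole argument to pass from $\Rpsc(M)$ to $\Mpsc(M)$. The last step in particular is essentially correct as you state it.

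There is, however, a genuine gap where you conclude non-concordance. You work with $\tilde\eta_\rho$ modulo $\Z$ and ask for the increment $\alpha_\rho$ to be \emph{irrational}, invoking ``irrationality of character values on roots of unity''. This cannot be arranged: for flat twisting bundles of virtual dimension zero the local term in the APS formula vanishes identically, so $\tilde\eta_\rho$ modulo $\Z$ is an invariant of the reduced spin bordism class over $B\mathcal G$; that reduced bordism group is finite, so every increment realizable by your bordism construction lies in $\Q$, and $i\alpha_\rho\bmod\Z$ then takes only finitely many values --- your argument as written yields only finitely many concordance classes. The fix, and what \cite{bg96} actually use, is that for a \emph{positive scalar curvature} concordance the APS index also vanishes by Lichnerowicz, so $\tilde\eta_\rho$ is an honest $\R$-valued invariant of PSC concordance classes; a non-zero \emph{rational} increment then already gives infinitely many distinct values $\tilde\eta_\rho(g_0)+i\alpha_\rho$. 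Beyond this, the two hardest points are asserted rather than proved and constitute the real content of \cite{bg96}: (i) the computation of $\alpha_\rho$ via Donnelly's equivariant fixed-point formula on spherical space forms, together with the verification that hypotheses (a)/(b) --- through the real/quaternionic symmetry $g\mapsto -g^{-1}$ resp.\ $g\mapsto g^{-1}$ that you correctly identify --- guarantee a virtual representation with $\alpha_\rho\neq 0$; and (ii) the construction of the self-bordism $W$: a naive connected sum with an auxiliary space form changes $\pi_1(M)$, so one must instead use the Gromov--Lawson--Stolz bordism theorem over $BG$ (exploiting that $M\to BG$ is a $2$-equivalence since $\widetilde M$ is $2$-connected and $n\geq 5$) to push a PSC metric from $M\sqcup Z$ back onto $M$. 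Finally, a smaller point: when $M$ itself is not spin the untwisted operator does not exist on $M$, so $\tilde\eta_\rho:=\eta_\rho-N\eta_1$ is undefined; one must instead use differences $\eta_{\rho_1}-\eta_{\rho_2}$ for representations of equal dimension with $\rho_i(-1)=-\mathrm{id}$.
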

	
	This theorem is an extension of the main theorems in \cite{bg95}, where $M$ is assumed to be spin. Then $\mathcal{G}=\Z_2\oplus G$, so condition $(a)$ is satisfied if and only if $G$ is non-trivial and condition $(b)$ is satisfied if and only if $G$ has an element that is not conjugate to its inverse.
	
	\begin{theorem}[{\cite[Theorem 0.3]{bg96}}]
		\label{thm_bg3}
		Let $M$ be a closed connected manifold of even dimension $n=2m\geq6$ with fundamental group $\Z_2$ and assume that $M$ is not orientable and admits a $\Pin^\varepsilon$ structure, where $\varepsilon=\textup{sign}(-1)^m$. Then, if $M$ admits a metric with positive scalar curvature, there exists an infinite family $g_i\in\Rpsc(M)$ such that $g_i$ and $g_j$ are not concordant and lie in different path components of $\Mpsc(M)$ if $i\neq j$.
	\end{theorem}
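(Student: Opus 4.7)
The plan is to define a real-valued invariant on $\Rpsc(M)$, built from the eta invariant of the $\Pin^\varepsilon$-Dirac operator, that is constant on path components of $\Rpsc(M)$ and $\Diff(M)$-invariant up to a controlled ambiguity, and then to construct infinitely many psc metrics whose invariants are pairwise distinct in the quotient.

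First I would set up the $\Pin^\varepsilon$-Dirac operator $D^\varepsilon_g$ associated with the given $\Pin^\varepsilon$-structure on $(M,g)$. The condition $\varepsilon=\textup{sign}(-1)^m$ in dimension $n=2m$ is precisely the one which makes $D^\varepsilon_g$ formally self-adjoint with discrete real spectrum, so that the eta function
\[
\eta(s)=\sum_{\lambda\neq 0}\textup{sign}(\lambda)\,|\lambda|^{-s}
\]
admits a meromorphic continuation regular at $s=0$. I would use the reduced eta invariant $\tilde\eta(M,g)=\tfrac{1}{2}(\eta(0)+\dim\ker D^\varepsilon_g)$, together with its twist by the flat real line bundle coming from the nontrivial character $\pi_1(M)=\Z_2\to\{\pm1\}$, and package these numerical data into a single $\R/\Z$-valued invariant $\bar\eta(M,g)$.

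The second step is to show that $\bar\eta$ is locally constant on $\Rpsc(M)$. Given a smooth path $g_t$ of psc metrics, I would apply the Atiyah-Patodi-Singer index theorem on the cylinder $W=M\times[0,1]$ equipped with a product-type metric interpolating between $g_0$ and $g_1$ and the pulled back $\Pin^\varepsilon$-structure. The Lichnerowicz-Schr\"odinger identity for the $\Pin^\varepsilon$-Dirac operator, combined with positive scalar curvature on the boundary, ensures that the boundary Dirac operators have trivial kernel, so the APS formula presents an integer as the sum of a locally defined interior integral and $-\tilde\eta(M,g_1)+\tilde\eta(M,g_0)$. Continuously deforming the interpolation, the interior integral varies continuously while the left side remains an integer, forcing $\tilde\eta(M,g_0)-\tilde\eta(M,g_1)\in\Z$. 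The same argument applied to a psc cobordism yields concordance invariance, and invariance under $\Diff(M)$ follows since diffeomorphisms permute the finite set of $\Pin^\varepsilon$-structures on $M$; after averaging or passing to the appropriate quotient one obtains a well-defined invariant of $\Mpsc(M)$.

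The main obstacle, and the heart of the argument, is constructing infinitely many metrics $g_i\in\Rpsc(M)$ whose $\bar\eta$-invariants are pairwise distinct. I would produce them as connected sums of $(M,g_0)$ with building blocks $(N_i,h_i)$ representing generators of a psc $\Pin^\varepsilon$-bordism group over $B\Z_2$, equipped with psc metrics via the Gromov-Lawson surgery theorem, which is applicable because $n\geq 6$ forces all handle attachments along the relevant bordisms to have codimension at least three. The essential input is that the eta invariant, viewed as a homomorphism from this psc $\Pin^\varepsilon$-bordism group to $\R/\Z$, has infinite image; this follows from explicit eta-invariant computations on $\Pin^\varepsilon$ analogues of lens spaces combined with the Anderson-Brown-Peterson calculation of $\Pin^\varepsilon$-bordism. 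The resulting connected sums define the desired $g_i$, and since $\Diff(M)$ can change $\bar\eta$ only through a countable set of integer shifts and permutations of the finitely many $\Pin^\varepsilon$-structures, passing to a subsequence produces an infinite family whose concordance classes, and whose classes in $\Mpsc(M)$, are pairwise distinct.
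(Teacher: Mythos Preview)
The paper does not contain a proof of this theorem. It is quoted verbatim as Theorem~0.3 of \cite{bg96} and is used only as a black box in the proof of the Main Theorem (for the even-dimensional case $G=\Z_2$). There is therefore nothing in this paper to compare your argument against.

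That said, your sketch is broadly in the spirit of the Botvinnik--Gilkey approach: define an $\R/\Z$-valued eta invariant of the $\Pin^\varepsilon$ Dirac operator, prove concordance and isotopy invariance via the APS index theorem and the Lichnerowicz formula, and then realise infinitely many values by taking connected sums with suitable $\Pin^\varepsilon$ bordism classes over $B\Z_2$, using the Gromov--Lawson surgery theorem to propagate psc. Two small points: the $\Pin^\pm$ bordism computations you need are not due to Anderson--Brown--Peterson (that is the $\Spin$ case) but rather to Kirby--Taylor and related work; and the statement that the condition $\varepsilon=\textup{sign}(-1)^m$ is ``precisely the one which makes $D^\varepsilon_g$ formally self-adjoint'' is not quite the right formulation---the point is rather that this is the parity for which the relevant Clifford module exists and the associated eta invariant gives a nontrivial homomorphism on the reduced $\Pin^\varepsilon$ bordism of $B\Z_2$. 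If you intend to include a proof, you should consult \cite{bg96} directly for the precise analytic setup and the bordism input.
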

	We now adapt, with the help of the results proven in Section 2, the proof of Theorem 0.1 in \cite{bg96} in order to prove the Main Theorem.
	\begin{proof}[Proof of the Main Theorem.]
		First assume that $n=2m-1$ is odd. Denote by $S$ a 2-Sylow subgroup of $G$. By Proposition \ref{prop_spFormFundGr} there are two possibilities: $S$ is cyclic or a generalized quaternion group. We consider each possibility separately.
		
		First assume that $S$ is cyclic. We use Proposition \ref{prop_spFormOrSpin} to determine in which cases $M$ is spin: If $m$ is even, then $M$ is spin and we can apply Theorem \ref{thm_bg2}. We can also do that if $m$ is odd and $|G|$ is odd, since then every non-trivial element of $G$ is not conjugate to its inverse. If $|G|$ is even and $m$ is odd, then $M$ is not spin. Then consider the central extension \eqref{eq_centralSpinExtension} given by
		\[1\longrightarrow \Z_2\longrightarrow\mathcal{G}\overset{\pi}{\longrightarrow} G\longrightarrow 1. \]
		It does not split since $M$ is not spin, so $\mathcal{S}=\pi^{-1}(S)$ is cyclic by Lemma \ref{l_centrZ2ext} and hence $\mathcal{G}$ is the semi-direct product of $\mathcal{S}$ and the unique maximal normal subgroup $\mathcal{N}$ of odd order by Lemmas \ref{lemma_semiDirProd} and \ref{l_centrZ2ext}. Let $g\in \mathcal{S}$. Then for $n\in \mathcal{N}$ we have
		\[n g n^{-1}=g\cdot (g^{-1} n g n^{-1}). \]
		As $g^{-1} n g n^{-1}\in \mathcal{N}$, it follows that $ngn^{-1}\in \mathcal{S}$ if and only if $g^{-1} n g n^{-1}$ is trivial and in this case $ngn^{-1}=g$. Hence, the only element of $\mathcal{S}$ to which $g$ can be conjugate is $g$ and $g$ satisfies the requirement of $(b)$ in Theorem \ref{thm_bg2} if $g\neq g^{-1}$. But $|\mathcal{S}|$ is a multiple of $4$, so such an element exists.
		
		Now consider the case where $S=Q_{2^{k+1}}$ is a generalized quaternion group. Then $m$ is even by Proposition \ref{prop_dimQuat}. If $\mathcal{G}$ has a non-trivial element $g$ of odd order, then both $-g$ and $-g^{-1}$ have even order, so $g$ is not conjugate to any of them and we can apply Theorem \ref{thm_bg2}. If there is no element of odd order, then $G=S$. Consider the generators $x$ and $y$ in the presentation \eqref{eq_quatRep}. In particular we have
		\[yxy^{-1}=x^{-1}. \]
		Denote by $\tilde{x}$ and $\tilde{y}$ preimages in $\mathcal{G}$. Then $\tilde{y}\tilde{x}\tilde{y}^{-1}=\pm\tilde{x}^{-1}$, so
		\[\tilde{y}\tilde{x}^2\tilde{y}^{-1}=\tilde{x}^{-2} \]
		holds. This shows that the conjugacy class of $\tilde{x}^{2}$ only consists of $\tilde{x}^2$ and $\tilde{x}^{-2}$. Hence $\tilde{x}^2$ is conjugate to $-\tilde{x}^2$ or $-\tilde{x}^{-2}$ if and only if $\tilde{x}^2=-\tilde{x}^{-2}$. This is the case if and only if $\tilde{x}$ has order $8$ and $x$ has order 4. By restricting the action as in the proof of Proposition \ref{prop_dimQuat} we see that the elements $x$ and $\tilde{x}$ have the same order, so $\tilde{x}^2$ is not conjugate to $-\tilde{x}^2$ or $-\tilde{x}^{-2}$ and we can apply Theorem \ref{thm_bg2}.
		
		Finally assume that $n=2m$ is even. Then $G=\Z_2$ by Proposition \ref{prop_spFormFundGr}, hence $M$ is homotopy equivalent to $\R P^n$ by Proposition \ref{prop_cyclicSpaceForm}. Denote by $a\in H^1(\R P^n;\Z_2)$ the generator of the cohomology ring. Then
		\[w_1(\R P^n)=(n+1)\cdot a=a\text{ and }w_2(\R P^n)=\frac{n(n+1)}{2}\cdot a^2. \]
		This shows that $w_2(\R P^n)=0$ if $m$ is even, so $M$ admits a $\Pin^+$ structure, and $w_2(\R P^n)+w_1(\R P^n)^2=a^2+a^2=0$ if $m$ is odd, so $M$ admits a $\Pin^-$ structure. In both cases we can apply Theorem \ref{thm_bg3}.
	\end{proof}

	\bibliographystyle{plainurl}
	\bibliography{ReferencesSpaceForms}

\end{document}